\newtheorem{theorem}{Theorem}
\newtheorem{lemma}[theorem]{Lemma}
\newtheorem{corollary}[theorem]{Corollary}
\newcommand{\eps}{\varepsilon}
\begin{document}

\title{Canard explosion and relaxation oscillation in planar, piecewise-smooth, continuous systems}
\date{\today}

\author{Andrew Roberts}
\affiliation{Department of Mathematics, Cornell University}

\begin{abstract}
Classical canard explosion results in smooth systems require the vector field to be at least $C^3$, since canard cycles are created as the result of a Hopf bifurcation.  The work on canards in nonsmooth, planar systems is recent and has thus far been restricted to piecewise-linear or piecewise-smooth Van der Pol systems, where an extremum of the critical manifold arises from the nonsmoothness.  In both of these cases, a canard (or canard-like) explosion may be created through a nonsmooth bifurcation as the slow nullcline passes through a corner of the critical manifold.  Additionally, it is possible for these systems to exhibit a super-explosion bifurcation where the canard explosion is skipped. This paper extends the results to more general piecewise-smooth systems, finding conditions for when a periodic orbit is created through either a smooth or nonsmooth bifurcation.  In the case the bifurcation is nonsmooth, conditions are found determining whether the bifurcation is a super-explosion or canards are created.  
\end{abstract}

\maketitle


\section{Introduction}
Canards are trajectories of fast/slow dynamical systems that pass from an attracting slow manifold to a repelling slow manifold, and remain near the repelling slow manifold for $\mathcal{O}(1)$ time.  In smooth, planar systems with fast and slow nullclines that intersect transversely, the counterintuitive canard trajectories often appear as the limit cycles resulting from a Hopf bifurcation that occurs as the intersection point nears a fold (or local extremum) of the fast nullcline.  If the fast nullcline, also called the {\it critical manifold} is `S'-shaped, then the limit cycles grow into the relaxation oscillations that one would expect to see based on the stability of the slow manifolds.  The limit cycles, also called Hopf cycles or {\it canard cycles} are short lived, and the transition to the more intuitive relaxation oscillations happens so rapidly that the transition is called a {\it canard explosion}.  This paper extends canard explosion results for piecewise-smooth, continuous (PWSC) Li\'{e}nard systems.

Fast/slow systems are usually analyzed using {\it geometric singular perturbation theory} (GSPT).  GSPT relies on the ability to study the fast and slow dynamics separately, and provides the tools required to piece the full dynamics together from the parts.  However, the basic theory breaks down at local extrema of the critical manifold because the fast and slow dynamics become tangent (and hence there is no longer a separation of time scales, locally).   GSPT can be extended to degenerate points such as extrema using a various dynamical systems tools such as blow-up \cite{ksGSP}.  For an in-depth introduction to GSPT, the reader is directed to the paper by Jones \cite{gsp}.

If the slow nullclines do not intersect the critical manifold at a fold point, then the fold point is a singularity of the slow dynamics.  If the intersection occurs at the fold point, however, then the fold point becomes a removable singularity and is called a {\it canard point}.  In the singular limit (i.e., when the small parameter $\eps=0$), canard points allow trajectories of the slow flow to cross from from a stable branch of the critical manifold to an unstable branch (or vice versa).  When perturbing away from the singular limit, the trajectories that cross between branches with different stability perturb to canard trajectories for $\eps$ small enough \cite{ksGSP}.

In PWSC systems, a local extremum of the critical manifold may no longer be a fold (as in the case of Figure \ref{fig:2curve}), instead resulting from a discontinuous change in the sign of a derivative.  Such an extremum is called a {\it corner}.  When the slow nullcline intersects a corner, it does not result in a removable singularity of the slow flow.  In fact, for PWSC Van der Pol systems, $\eps$ can be {\it too small} for canards to exist \cite{pwlCanards, aar1}!

This paper will focus on PWSC Li\'{e}nard systems of the form
\begin{equation}
	\label{general}
	\begin{array}{l}
		\dot{x} = -y + F(x) \\
		\dot{y} = \eps g(x,y;\lambda, \eps)
	\end{array}
\end{equation}
where $$ F(x) = \left\{ \begin{array}{ll}
	f_-(x) & x\leq 0 \\
	f_+(x) & x \geq 0
	\end{array} \right. $$ 
with $f_-,f_+ \in C^k, k \geq 3$ such that $f_-(0)=f_+(0)=0$, $f_-'(0) \leq 0$, and $f_+'(0) \geq 0$ but omitting the case that $F \in C^1$ (i.e., $f_-'(0) = 0 = f_+'(0) $ is not allowed).  Additionally, it will be assumed that $f_+$ has a maximum at $x_M > 0$.  The restriction that either $f_-'(0) < 0$ or $f_+'(0) > 0$ (or both) is imposed, so that $F$ is not differentiable at $0$.  In this case, the critical manifold $$M_0 = \{y= F(x) \}$$ is `2'-shaped with a smooth fold at $x_M$ and a corner at $x=0$ as in Figure \ref{fig:2curve}.  Since $F'(0)$ does not exist, the set $\{x=0\}$ will be called the {\it splitting line}.

\begin{figure}[t]
                \includegraphics[width=0.45\textwidth]{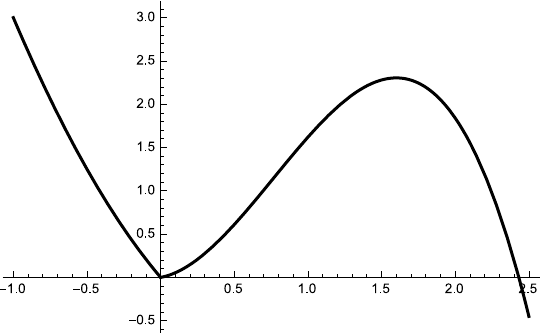}
                \caption{`2'-shaped critical manifold. }
                \label{fig:2curve}    
\end{figure}

Historically, a number of different techniques have been used to study canards in smooth, planar systems.  Benoit {\it et al.} first studied canard phenomena using nonstandard analysis \cite{benoitEA}.  Since the trajectories resemble ducks (see Figure \ref{fig:smoothDuck}), this group named them ``canards,'' and they are referred to as such (or as ``French ducks'') throughout the literature.  After Benoit {\it et al.}, Eckhaus used matched asymptotic expansions to examine canards \cite{eck}.  Recently, canards have analyzed through blow-up techniques.  The idea was first introduced by Dumortier and Roussarie \cite{dumort96} and later adopted by Krupa and Szmolyan \cite{ksGSP, ksRO}.  The use of blow-up techniques has been a significant development in the study of canards, allowing mathematicians to consider canard phenomena in higher dimensions.  In planar systems, canard trajectories only exist for an exponentially small parameter range, however they are more robust in higher dimensions \cite{brons, mmosurvey, szmolwechs, mw05}.

Canard research was reinvigorated by the implications in higher dimensions, eventually leading to a search for canards in piecewise-smooth systems.  The exploration began with a search into piecewise-linear systems by Rotstein, Coombes, and Gheorghe \cite{rotstein}.  Then Desroches {\it et al.} performed further analysis in the piecewise-linear Van der Pol case, demonstrating the existence of the {\it super-explosion} bifurcation, where a system transitions from a state with an attracting equilibrium instantaneously into a state with large relaxation oscillations, forgoing the canard explosion \cite{pwlCanards}.  Roberts and Glendinning extended these result to nonlinear piecewise-smooth Van der Pol systems, showing that nonlinearity plays a key role in the shape of the canard explosion.  The possibility of a subcritical super-explosion---a super-explosion bifurcation where an attracting relaxation oscillation appears prior to the destruction of the stable equilibrium---was also demonstrated \cite{aar1}.  In higher dimensions, Prohens and Teruel showed that a unique canard trajectory persists outside of the singular limit in 3D piecewise linear systems \cite{prohens13}.

\begin{figure}[t]
        \centering
        \begin{subfigure}[b]{0.45\textwidth}
                \includegraphics[width=\textwidth]{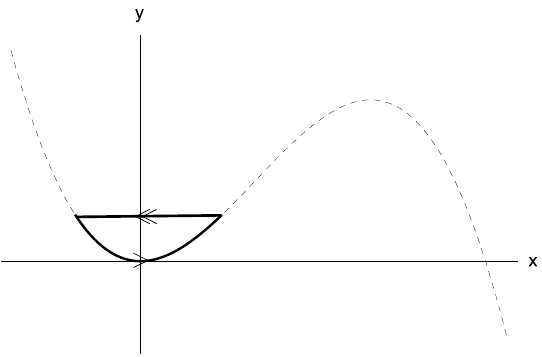}
                \caption{Singular canard ``without head.'' }
                \label{fig:smoothNoHead}
        \end{subfigure}
        ~ 
        \begin{subfigure}[b]{0.45\textwidth}
                \includegraphics[width=\textwidth]{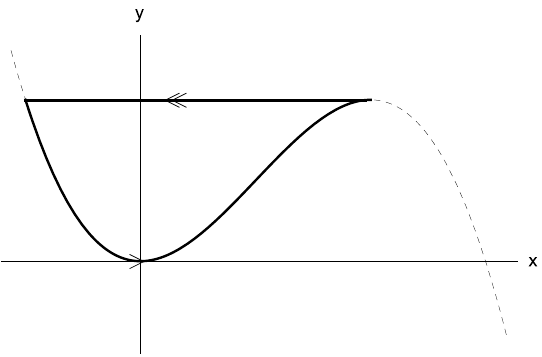}
                \caption{Singular ``maximal'' canard cycle.}
                \label{fig:smoothMax}
        \end{subfigure}
        \\ \vspace{0.4in}
                \centering
        \begin{subfigure}[b]{0.45\textwidth}
                \includegraphics[width=\textwidth]{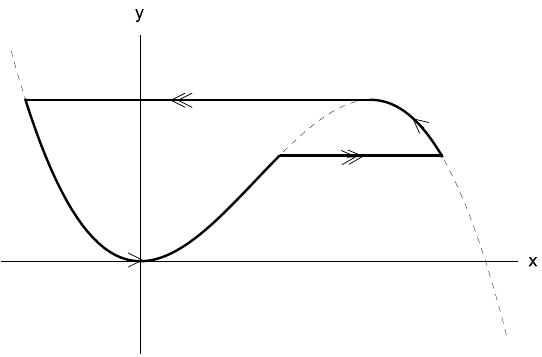}
                \caption{Singular canard ``with head.''}
                \label{fig:smoothHead}
        \end{subfigure}
        ~ 
        \begin{subfigure}[b]{0.45\textwidth}
                \includegraphics[width=\textwidth]{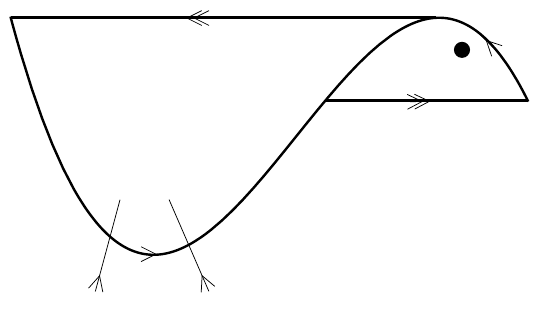}
                \caption{A duck!}
                \label{fig:smoothDuck}
        \end{subfigure}
        \caption{Examples of the three types of singular canard cycles in smooth systems: (a) canard without head, (b) maximal canard, and (c) canard with head.}
        \label{fig:smoothCanards}      
\end{figure}

This paper further extends the results of Roberts and Glendinning by allowing for more complicated slow dynamics.  Since the Hopf bifurcation near the smooth fold of the `2'-shaped critical manifold is still a smooth Hopf bifurcation, only the actual bifurcation point changes.  However, it is shown in Theorem \ref{thm:corner} that the more complicated slow dynamics affects type of bifurcation that occurs near the corner.  In the PWS Van der Pol case, the slow nullcline is a vertical line and there is a bifurcation creating stable periodic orbits as the nullcline passes through the corner of the `2'-shaped manifold \cite{aar1}.  In contrast to the smooth case, Theorem \ref{thm:corner} shows that the Hopf bifurcation may still occur at the critical manifold of the extremum even if the slope of the slow nullcline is not perfectly vertical.  That is, there is an open set in parameter space for which the bifurcation happens precisely at the corner.

The canard explosion phenomenon involves both local dynamics near a bifurcation point and global dynamics of a fast/slow system.  The existence or non-existence of stable canard orbits relies on the nature of the bifurcation near an extremum of the `2'-shaped manifold, and conditions determining the type of bifurcation are found.  The global analysis will be similar to that of \cite{aar1}, utilizing a {\it shadow system} that extends $f_+$ into the left half-plane (so that it is at least $C^3$).  The shadow system has the form:
\begin{equation}
	\label{shadow}
	\begin{array}{rl}
		\dot{x} &= -y + f_+(x) \\
		\dot{y} &= \eps g(x,y;\lambda,\eps).
	\end{array}
\end{equation}
Trajectories of \eqref{general} and \eqref{shadow} coincide in the right half-plane, but not for $x<0$.  The different behavior in the left half-plane allows the trajectories of both systems to be compared, showing that trajectories of the shadow system bound trajectories of the nonsmooth system. 

Section \ref{section:results} discusses the results.  First, the shadow system is discussed in \ref{sub:shadow}.  Next, a result about canard cycles near $x_M$ is presented in \ref{sub:smooth}.  Then the focus turns to canards near the origin in \ref{sub:corner}.  Finally, the paper concludes with a discussion in section \ref{section:discussion}.

\section{Results}
\label{section:results}
Given a system of the form \eqref{general}, the critical manifold is defined to be
$$M_0 = \{ y = F(x) \}.$$
The different branches of $M_0$ will be denoted
\begin{align*}
	M^l &= \{ (x, F(x) ): x<0 \} =  \{ (x, f_-(x) ): x<0 \} \\
	M^m &= \{ (x, F(x)) : 0 < x < x_M \} =  \{ (x, f_+(x)) : 0 < x < x_M \}  \\
	M^r &= \{ (x, F(x)): x > x_M \} = \{ (x, f_+(x)): x > x_M \}.
\end{align*}
$M^l$ and $M^r$ are attracting while $M^m$ is repelling.  

\subsection{The shadow system}
\label{sub:shadow}

The system \eqref{general} will be referred to as the true system.  Since $f_+'(0)$ exists, and in fact $f_+ \in C^3$ on the right half-plane, it can be extended to a $C^3$ function that is defined for $x < 0$.  Doing so produces the shadow system \eqref{shadow} whose trajectories agree with those of \eqref{general} for $x>0$.  The following lemma describes the relationship of the trajectories for $x<0$.

\begin{lemma}[Shadow Lemma]
\label{lem:shadow}
Let $\gamma_r(t) = (x_r(t), y_r(t))$ denote a trajectory of the true system \eqref{general}.  Assume $\gamma_r$ crosses the $y$-axis, entering the left half-plane at $\gamma_n(0) = (0, y_c)$.  Also consider the analogous trajectory $\gamma_s$ of the shadow system \eqref{shadow}, with $f_+(x) < f_-(x)$ for $x<0$.  Then, the radial distance from the origin of $\gamma_n$ is bounded by $\gamma_s$.
\end{lemma}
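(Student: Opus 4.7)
The plan is to establish a pointwise radial inequality in the left half-plane and then promote it to a trajectory-level bound by a scalar comparison.

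First I would compute the radial rate $r\dot r = x\dot x + y\dot y$ along each trajectory. Since the two systems share identical $y$-dynamics and differ in $\dot x$ only by $f_-(x)-f_+(x)$ in the left half-plane, at any common position $(x,y)$ with $x<0$ one has
\[
r\dot r_s - r\dot r_n \;=\; x\bigl(f_+(x) - f_-(x)\bigr) \;>\; 0,
\]
because both factors are negative. Thus at every shared position in the left half-plane the shadow has strictly larger radial growth rate than the true system. Both trajectories leave $(0,y_c)$ with identical initial velocity (since $f_\pm(0)=0$), but as $x$ becomes negative $f_-(x)>0$ pushes the true trajectory back toward the $y$-axis while $f_+(x)<0$ pushes the shadow further left, giving $|x_n(t)|<|x_s(t)|$ and therefore $r_n(t)<r_s(t)$ in a right neighborhood of $t=0$.

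To propagate the ordering throughout the sojourn in the left half-plane, I would reparameterize each trajectory by $y$ on every maximal subinterval where $\dot y$ has a fixed sign, reducing the motion to the scalar ODE
\[
\frac{dx}{dy} \;=\; \frac{-y + F(x)}{\eps\, g(x,y)},
\]
with $F=f_-$ for the true trajectory and $F=f_+$ for the shadow. At any common $(x,y)$ with $x<0$ the two right-hand sides are ordered by the sign of $f_-(x)-f_+(x)>0$, so the standard scalar comparison theorem yields $|x_n(y)|\le|x_s(y)|$ at every common $y$, whence $r_n(y)^2 \le r_s(y)^2$. Continuity of $r$ glues the bound across the isolated levels at which $\dot y$ vanishes, the comparison on each subsequent subinterval being initialized by the already-ordered values inherited from the previous one.

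The chief obstacle is precisely the gap between the pointwise inequality $\dot r_s>\dot r_n$ at coincident positions and the desired trajectory-level bound $r_n\le r_s$ at coincident times: a naive first-contact argument in $r$ fails because at a first radial meeting the two trajectories generally sit at genuinely distinct points on the same circle about the origin, where the sign of $f_--f_+$ alone no longer controls the comparison of the vector fields. Reducing to the scalar $dx/dy$ equation, in which both trajectories pass through every relevant $y$, sidesteps this obstruction, with the remaining technical care concentrated at the finitely many levels where the $y$-parameterization breaks.
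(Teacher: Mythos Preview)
Your pointwise radial computation and your second-order argument for the initial ordering match the paper's exactly. Where you diverge is in how the ordering is propagated.

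The paper does not compare $r_n$ and $r_s$ at coincident times, nor does it reparameterize by $y$. Instead it treats $\gamma_s$ as a fixed curve in the left half-plane and reads the pointwise inequality as a barrier condition: at each $p\in\gamma_s$ with $x(p)<0$ one has $V_r(p)-V_s(p)=\bigl(f_-(x)-f_+(x),\,0\bigr)$, a vector pointing strictly in the positive $x$-direction; since $V_s(p)$ is tangent to $\gamma_s$, the true field therefore points to the $y$-axis side of $\gamma_s$ everywhere. The second-order Taylor expansion at $(0,y_c)$ (where the two fields agree to first order) confirms that $\gamma_r$ enters on that side, after which the one-way barrier cannot be recrossed. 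The obstacle you flag---that at a first radial contact the two trajectories sit at distinct points on the same circle---simply never arises in this framing, because nothing is compared at equal times or equal radii.

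Your $dx/dy$ route is valid on each $y$-monotone arc, but the gluing step is less innocent than ``continuity of $r$'' suggests. The two trajectories generally meet the slow nullcline $\{g=0\}$ at different $y$-levels, so their monotone subintervals are mismatched: when $\gamma_r$ turns around, $\gamma_s$ may still be descending, and the ordered endpoint inherited from the previous arc sits at the wrong $y$-level to initialize the next scalar comparison. This is not fatal---it can be repaired with more bookkeeping, or by invoking the barrier picture locally near each turning point---but the paper's geometric argument sidesteps the issue entirely and is the shorter path to the same conclusion.
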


\begin{proof}
Let 
$$R(x,y) = \frac{x^2+y^2}{2}.$$
Then, the change in $R$ over time depends on the evolution of $x$ and $y$, and hence which vector field produces the trajectories when $x < 0$.  Denote $R_r$ to be the evolution of $R$ under the true system and $R_s$ to be the evolution of $R$ under the shadow system.  Then
\begin{align*}
	\dot{R}_r (x,y) &= x( f_-(x) - y ) + \eps y g(x,y,\eps) \\
	\dot{R}_s (x,y) &= x( f_+(x) - y ) + \eps  y g(x,y,\eps).
\end{align*}
for $x \leq 0$.  For a given $(x,y)$, 
$$ \dot{R}_r (x,y) - \dot{R}_s (x,y) = x [ f_-(x) - f_+(x) ] \leq 0,$$
since $x \leq 0$ and $f_-(x) < f_+(x)$, where equality holds only if $x = 0$.  Thus, the vector field of \eqref{general} points ``inward'' on trajectories of the shadow system.  Both $\gamma_r$ and $\gamma_s$ enter the left half-plane through the point $(0,y_c)$.  It remains to show that for $x<0$, $\gamma_r$ passes below $\gamma_s$.  That is, showing $R( \gamma_r (\delta t)) < R (\gamma_s (\delta t))$ for $\delta t > 0$ sufficiently small will complete the proof.  Since the vector fields \eqref{general} and \eqref{shadow} coincide on the $y$ axis, the comparison will be made using second order terms:
\begin{align*}
	\left( \begin{array}{c}  x_r (\delta t) \\ y_r (\delta t)  \end{array} \right)&=
		\left( \begin{array}{c} 
			0 + \dot{x}_r \delta t + \ddot{x}_r \delta t^2 + \ldots \\
			y_c + \dot{y}_r \delta t + \ddot{y}_r \delta t^2 + \ldots \end{array}  \right) \\
	\left( \begin{array}{c}  x_s (\delta t) \\ y_s (\delta t)  \end{array} \right)&=
		\left( \begin{array}{c} 
			0 + \dot{x}_s \delta t + \ddot{x}_s \delta t^2 + \ldots \\
			y_c + \dot{y}_s \delta t + \ddot{y}_s \delta t^2 + \ldots \end{array}  \right).
\end{align*}
The $0^{th}$ and $1^{st}$ order terms agree, so the important terms are
\begin{equation*}
	\begin{array}{lll}
		\ddot{x}_r =&	 - \dot{y}_r + f_-'(0) \dot{x}_r   	& = - \eps g(0,y_c; \lambda, \eps) - y_c f_-'(0) 		\\
		\ddot{x}_s =& 	-\dot{y}_s + f_+'(0) \dot{x}_s 	&= - \eps g(0,y_c; \lambda, \eps) - y_c f_+'(0) 			
	\end{array}
\end{equation*}		

and
\begin{equation*}
	\begin{array}{lll}
		\ddot{y}_r =& 	\eps (g_x \dot{x}_r + g_y \dot{y}_r )	& = \eps (- g_x y_c + g_y g(0,y_c; \lambda, \eps) )					\\
		\ddot{y}_s =&	\eps (g_x \dot{x}_s + g_y \dot{y}_s) 	& = \eps (- g_x y_c + g_y g(0,y_c; \lambda, \eps)) .
	\end{array}
\end{equation*}

Since it is assumed the vector fields point left at $(0,y_c)$, it is seen that $\dot{x} < 0$, and hence $y_c >0$.  The $\ddot{y}$ terms agree in both systems, so he relevant terms are the $\ddot{x}$ terms.  Because $-f_-'(0) > -f_+'(0)$, the trajectory of the shadow system moves further left for the same vertical change as the trajectory of the true system (see Figure \ref{fig:arrows}).  Thus, $\gamma_r$ enters the region $x<0$ below the shadow trajectory $\gamma_s$.  Since $y_c$, $\gamma_r$ will be closer to the origin than the shadow trajectory, and once inside is bounded by $\gamma_s$, proving the result.

 \begin{figure}[t]
	\begin{center}
		\includegraphics[width=0.6\textwidth]{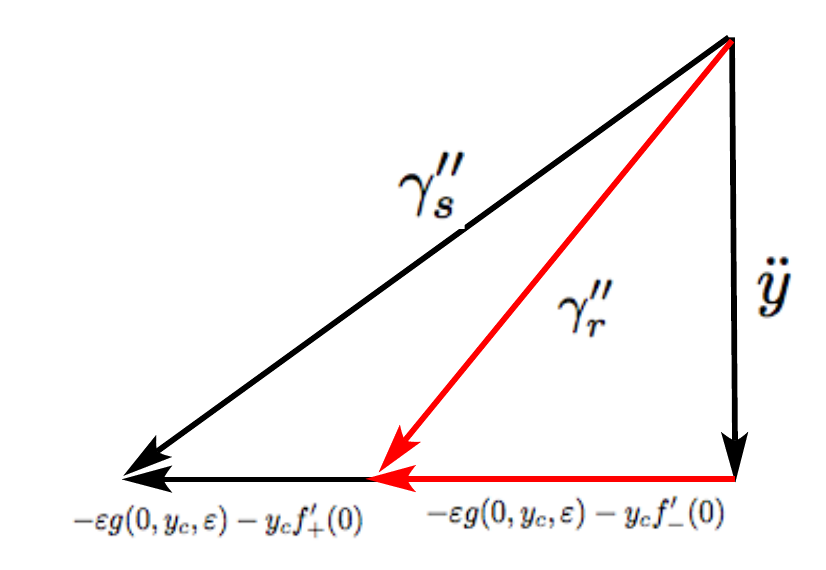}
			\caption{Relative slopes of vectors as discussed in the proof of Lemma \ref{lem:shadow}. }
			\label{fig:arrows}
	\end{center}
\end{figure}
\end{proof}

\begin{corollary}
\label{lem:CorShad}
Let $\gamma_r(t) = (x_r(t), y_r(t))$ denote a trajectory of the true system \eqref{general}.  Assume $\gamma_r$ crosses the $y$-axis, entering the left half-plane at $\gamma_n(0) = (0, y_c)$.  Also consider the analogous trajectory $\gamma_s$ of the system
	\begin{equation}
		\begin{array}{rl}
			\dot{x} &= -y + \tilde{F}(x) \\
			\dot{y} &= \eps g(x,y; \lambda, \eps),
		\end{array}
	\end{equation}
	where $$ \tilde{F}(x) = \left\{ \begin{array}{ll}
	\tilde{f}_-(x) & x\leq 0 \\
	f_+(x) & x \geq 0
	\end{array} \right.$$ 
and $\tilde{f}_-'(0) > f_-'(0)$. Then, the radial distance from the origin of $\gamma_n$ is bounded by that of $\gamma_s$.
\end{corollary}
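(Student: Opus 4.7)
The plan is to mirror the two-step structure of the Shadow Lemma, substituting $\tilde{f}_-$ for the extension of $f_+$ throughout. The local second-order argument and the global radial comparison used there both translate to the present setting, with the derivative inequality $\tilde{f}_-'(0) > f_-'(0)$ playing the role that $f_+'(0) > f_-'(0)$ played before.

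First I would set $R(x,y) = (x^2+y^2)/2$ and compute, for $x \leq 0$,
\begin{equation*}
   \dot{R}_r(x,y) - \dot{R}_s(x,y) = x\bigl[f_-(x) - \tilde{f}_-(x)\bigr].
\end{equation*}
Because $\tilde{F}(0) = F(0) = 0$ forces $\tilde{f}_-(0) = f_-(0)$, and because $\tilde{f}_-'(0) > f_-'(0)$, Taylor expansion gives $\tilde{f}_-(x) < f_-(x)$ for $x$ in a left-neighborhood of $0$. Hence $x[f_-(x)-\tilde{f}_-(x)] \leq 0$ on that neighborhood, so the vector field of the true system points inward relative to trajectories of the modified system there. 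This is exactly the inward-pointing conclusion used in the Shadow Lemma, only the comparison function has changed.

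Next I would redo the second-order expansion at the entry point $(0,y_c)$. The $0^{\text{th}}$- and $1^{\text{st}}$-order terms agree because both vector fields coincide on the $y$-axis (they depend only on $f_\pm(0)=0$ and the common $g$), so the first distinguishing contribution is
\begin{equation*}
   \ddot{x}_r - \ddot{x}_s = -y_c\bigl(f_-'(0) - \tilde{f}_-'(0)\bigr) > 0,
\end{equation*}
using $y_c > 0$ (needed for $\dot{x} < 0$ at the crossing). Thus the $\tilde{F}$-trajectory accelerates further to the left than $\gamma_r$, so $\gamma_r$ enters the region $x<0$ strictly inside (nearer the origin than) $\gamma_s$. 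Combining with the $\dot{R}_r \leq \dot{R}_s$ inequality just derived, the radial distance of $\gamma_r$ remains bounded above by that of $\gamma_s$ for as long as both stay in the left half-plane.

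The one delicate point is that the hypothesis only constrains $\tilde{f}_-$ at $x=0$, while the $\dot{R}$ comparison wants a function-level inequality on all of the relevant part of $\{x<0\}$. I would address this exactly as in the Shadow Lemma: since the trajectory under consideration stays in a bounded neighborhood of the origin where the derivative condition forces $\tilde{f}_- < f_-$ pointwise, the local inequality suffices; alternatively, one can simply add the natural global hypothesis $\tilde{f}_-(x) < f_-(x)$ for $x<0$, which is the direct analogue of the Shadow Lemma's assumption $f_+(x) < f_-(x)$ for $x<0$. This is the only step I would expect to require care; the rest of the argument is essentially a relabeling of the Shadow Lemma's proof.
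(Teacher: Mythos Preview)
Your proposal is correct and takes exactly the approach the paper intends: the paper's entire proof of this corollary is the single sentence ``The proof is identical to that of Lemma~\ref{lem:shadow},'' and you have simply written out that identical argument with $\tilde{f}_-$ in place of the smooth extension of $f_+$. Your observation about the derivative hypothesis at $0$ versus a pointwise inequality for $x<0$ is a genuine subtlety that the paper glosses over, so you are in fact being more careful than the original.
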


\begin{proof}
The proof is identical to that of Lemma \ref{lem:shadow}
\end{proof}

\subsection{Canard cycles near the smooth fold}
\label{sub:smooth}
Previous work on canards in PWSC systems has focused on Van der Pol type systems---that is, systems where $g(x,y;\lambda,\eps) = x- \lambda$ \cite{pwlCanards, aar1}.  In these systems with vertical slow nullclines, the Hopf (or Hopf-like) bifurcations occur precisely when the slow nullcline passes through an extremum of the critical manifold.  In more general systems such as \eqref{general}, the bifurcation point may move.  The nature of the growth of canard cycles requires an interplay between the local analysis near a Hopf bifurcation and global dynamics involving some form of return mechanism (often through an `S'-shaped critical manifold).  This interplay is the reason the explosion in the PWL case \cite{pwlCanards} is different from that of the nonlinear PWS case \cite{aar1}.  The shadow lemma above generalizes a lemma from \cite{aar1}, essentially guaranteeing that the global analysis will be unchanged by the more general slow dynamics.  The bulk of the work in the remainder of this section will be dedicated to local analysis near a Hopf bifurcation or some nonsmooth variant.

The first case that will be considered is when the Hopf bifurcation occurs near the smooth fold at $x_M$.  The local dynamics here are that of the smooth system \eqref{shadow} since $x_M >0$.  Classical results discuss the nature of the Hopf bifurcation (i.e., local behavior), but technically do not discuss the full nature of the canard explosion since $F,$ and consequently the vector field, is not $C^3$.

\begin{theorem}
	\label{thm:fold}
	Let $0 < \epsilon \ll 1$ be sufficiently small and suppose the parameter $\lambda$ indicates the $x$-coordinate of a unique and transverse intersection point of the sets $\{ g(x,y;\lambda, \eps) =0 \}$ and $\{ y = F(x) \}$.  If  $$g_x  (x_M , F(x_M) ; x_M , \eps ) > 0, $$ then the system \eqref{general} undergoes a Hopf bifurcation for some $\lambda_M$ in a neighborhood of $x_M$.
		\begin{enumerate}[(i)]
			\item If the Hopf bifurcation is supercritical, then it will produce stable canard cycles.  
			\item If the Hopf bifurcation is subcritical, then it will produce stable relaxation oscillations.   Additionally, unstable canard orbits will exist for values of $\lambda$ in an $\mathcal{O}(\eps)$ range prior to the bifurcation, and the canard orbits will be destroyed through the bifurcation.
		\end{enumerate}
\end{theorem}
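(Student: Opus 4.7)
The plan is to exploit the fact that the smooth fold $x_M$ lies strictly in the right half-plane, so that in a neighborhood of $(x_M, F(x_M))$ the true system \eqref{general} coincides with the smooth shadow system \eqref{shadow}. I would apply classical smooth canard theory (in the style of Krupa-Szmolyan \cite{ksGSP,ksRO}) to \eqref{shadow} near $x_M$, and then use the Shadow Lemma to transfer the resulting bifurcating cycles into the nonsmooth setting of \eqref{general}.

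First I would locate the Hopf point. The equilibrium of \eqref{shadow} at parameter $\lambda$ sits at $(\lambda, f_+(\lambda))$, and its Jacobian has trace $f_+'(\lambda) + \eps g_y$ and determinant $\eps(g_x + f_+'(\lambda) g_y)$, with $g_x, g_y$ evaluated at the equilibrium. Since $f_+'(x_M) = 0$ and $f_+''(x_M) \neq 0$, the implicit function theorem produces a unique $\lambda_M = x_M + \mathcal{O}(\eps)$ at which the trace vanishes; the hypothesis $g_x(x_M, F(x_M); x_M, \eps) > 0$ then forces the determinant to be positive for small $\eps$, so the linearization has a pure-imaginary eigenvalue pair. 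Transversality of the eigenvalue crossing as $\lambda$ varies follows from the transverse intersection of nullclines together with $f_+''(x_M) \neq 0$, yielding a genuine Hopf bifurcation of \eqref{shadow}. Because the two vector fields agree in a full neighborhood of $(x_M, F(x_M))$, the same is a Hopf bifurcation of \eqref{general} at the identical parameter value.

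Next, I would invoke the classical canard explosion analysis for the smooth system \eqref{shadow}: the sign of the first Lyapunov coefficient determines criticality, and in the supercritical case the bifurcating cycle emerges as a small canard cycle that grows through a canard explosion into a full relaxation oscillation over an exponentially narrow range of $\lambda$, while in the subcritical case the small Hopf cycle is unstable and the attractor born in the bifurcation is a large relaxation oscillation. So long as these cycles remain in $\{x>0\}$, they are already cycles of \eqref{general}, so the statement holds in this regime for free.

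The main obstacle is the transfer once a cycle grows large enough to cross the splitting line $\{x=0\}$. Here the Shadow Lemma (Lemma \ref{lem:shadow}) becomes essential: any left-half-plane excursion of a trajectory of \eqref{general} is radially bounded by the corresponding shadow excursion, which keeps it attracted to $M^l$ and ensures a controlled return to $M^r$. Combining this sandwich estimate with the standard Fenichel slow-flow analysis along $M^l$ shows that the Poincar\'{e} return map for \eqref{general} inherits the relevant bifurcation structure (existence and stability of fixed points) from the shadow system's return map, so that the canard cycles and relaxation oscillations provided by the smooth theory survive as cycles of the true system, yielding (i) and (ii).
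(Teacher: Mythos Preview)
Your proposal is correct and follows essentially the same route as the paper: a Jacobian computation at the equilibrium $(\lambda, f_+(\lambda))$ to locate a genuine Hopf point $\lambda_H$ near $x_M$ (using $f_+'(x_M)=0$ for the trace condition and $g_x>0$ for the determinant), followed by classical smooth canard theory near the fold and the Shadow Lemma for the global return once cycles cross $\{x=0\}$. The paper's own proof is in fact terser than yours---it defers the entire canard/relaxation-oscillation argument to Theorem~II.3 of \cite{aar1}---so you have effectively unpacked that citation, which is precisely the shadow-system comparison you sketch.
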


\begin{figure}[t]
	\begin{center}
		\includegraphics[width=0.6\textwidth]{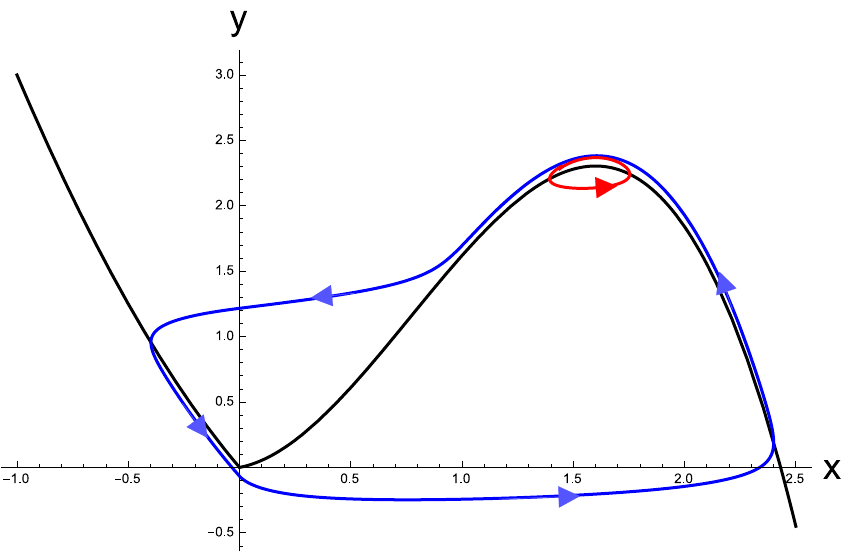}
			\caption{Example of a canard cycle (red) and canard with head (blue) at the smooth fold when $\eps = 0.1.$  Trajectories computed using NDSolve in Mathematica.}
			\label{fig:fold}
	\end{center}
\end{figure}\

\begin{proof}
The point $(\lambda, F(\lambda))$ is an equilibrium since it occurs at an equilibrium of the nullclines.  Linearizing about this equilibrium provides the Jacobian
$$ \mathcal{J} = \left( \begin{array}{cc}
	F'(\lambda) & -1 \\
	\eps g_x (\lambda, F(\lambda); \lambda, \eps) & \eps g_y (\lambda, F(\lambda); \lambda, \eps)
	\end{array} \right).$$
	Since $g_x (x_M, F(x_M); x_M, \eps) >0$, there exists a neighborhood $V(x_M)$ such that for all $\lambda \in V(x_M)$, $g_x (\lambda, F(\lambda); \lambda, \eps) > 0$.  Also, since $F'(x_M) = f_+'(x_M) = 0$, there exists a $\lambda_H$ in a neighborhood of $x_M$ such that 
	$$F'(\lambda_H) = - \eps g_y (\lambda_H, F(\lambda_H); \lambda_H, \eps).$$  
	If $\eps$ is sufficiently small, it is clear that $\lambda_H \in V(x_M)$.  Thus, there is a Hopf bifurcation at $(\lambda_H, F(\lambda_H))$.  Since the bifurcation is smooth, its criticality can be determined by the formulae found in \cite{glendinning1994stability} or \cite{ksRO}, for example.  
	The proof of the existence canard cycles or relaxation oscillations is identical to that of Theorem II.3 from \cite{aar1}.
	\end{proof}

Combining Theorem \ref{thm:fold} and Lemma \ref{lem:shadow} provides the following corollary bounding the size of the periodic orbits created by the Hopf bifurcation.  Figure \ref{fig:fold} depicts the result.

\begin{corollary}
	\label{cor:fold}
	Let $\Gamma_s$ be periodic orbits created through a Hopf bifurcation near $x_M$ in the shadow system \eqref{shadow} and let $\Gamma_n$ be the periodic orbits created by the Hopf bifurcation in the PWS system \eqref{general}.  Then $\Gamma_n$ are bounded by $\Gamma_s$.  
\end{corollary}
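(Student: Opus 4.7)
The plan is to combine the agreement of \eqref{general} and \eqref{shadow} in the right half-plane with Lemma~\ref{lem:shadow} to show that at every $\lambda$ for which both Hopf orbits exist, $\Gamma_n(\lambda)$ lies in the closed region $\bar{D}_s(\lambda)$ bounded by $\Gamma_s(\lambda)$.

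First I would note that the Hopf equilibrium $(\lambda_H, F(\lambda_H))$ lies in $\{x > 0\}$ because $\lambda_H$ is near $x_M > 0$. Consequently the Jacobians of the true and shadow systems at this point are identical, both systems undergo the Hopf bifurcation at the same parameter value $\lambda_H$, and for $\lambda$ sufficiently close to $\lambda_H$ the small-amplitude Hopf orbits lie entirely in $\{x > 0\}$ where the vector fields coincide, so $\Gamma_n(\lambda) = \Gamma_s(\lambda)$ identically in this range. The nontrivial content of the corollary is the comparison once $\Gamma_s(\lambda)$ has grown to cross the $y$-axis.

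For those $\lambda$, I would use the $y$-axis as a Poincar\'e section. Let $(0, y_+)$ and $(0, y_-)$ denote the upper and lower intersections of $\Gamma_s(\lambda)$ with the $y$-axis. Applying Lemma~\ref{lem:shadow} to the trajectory of \eqref{general} launched from $(0, y_+)$ shows that this trajectory stays radially closer to the origin than $\Gamma_s$ throughout the left half-plane, and in particular re-enters $\{x \ge 0\}$ inside $\bar{D}_s(\lambda)$. In $\{x \ge 0\}$ the two vector fields coincide, so $\Gamma_s \cap \{x \ge 0\}$ is itself a trajectory of \eqref{general} and no other true orbit can cross it by uniqueness. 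Together these two facts make $\bar{D}_s(\lambda)$ forward-invariant under the true flow. Since $\Gamma_n(\lambda)$ depends continuously on $\lambda$, agrees with $\Gamma_s(\lambda)$ near $\lambda_H$, and cannot escape $\bar{D}_s(\lambda)$ as $\lambda$ varies without first touching $\Gamma_s(\lambda)$ tangentially from the inside, and since such a tangential touching is ruled out either by uniqueness (if it occurred in $\{x > 0\}$) or by the fact that the true and shadow vector fields have distinct tangent directions wherever $f_+(x) < f_-(x)$, I conclude $\Gamma_n(\lambda) \subseteq \bar{D}_s(\lambda)$ throughout the parameter range in question.

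The main obstacle is upgrading Lemma~\ref{lem:shadow} from a comparison of two trajectories issued from a common point on the $y$-axis to the pointwise forward invariance of $\bar{D}_s$ along all of $\Gamma_s \cap \{x < 0\}$. The Shadow Lemma bounds radial distance from the origin rather than signed position relative to $\Gamma_s$, and translating the radial inequality $\dot{R}_r \le \dot{R}_s$ into an inward normal component with respect to $\Gamma_s$ requires repeating the second-order Taylor argument of Lemma~\ref{lem:shadow} at an arbitrary common base point in $\{x < 0\}$ and using the strict inequality $f_+(x) < f_-(x)$ there. This pointwise version of the shadow comparison is the step I would want to spell out most carefully before invoking the continuity-in-$\lambda$ argument to close the proof.
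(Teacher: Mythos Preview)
Your proposal is correct and follows exactly the route the paper intends: the paper gives no explicit proof of this corollary, merely stating that it follows by combining Theorem~\ref{thm:fold} with Lemma~\ref{lem:shadow}, and your argument is a careful unpacking of precisely that combination.

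Regarding the obstacle you flag: it is a fair point that the \emph{statement} of Lemma~\ref{lem:shadow} only bounds radial distance from the origin, whereas you need the true vector field to point into $\bar{D}_s(\lambda)$ along all of $\Gamma_s\cap\{x<0\}$. But you do not need to re-derive this from scratch. The proof of Lemma~\ref{lem:shadow} already asserts, from the computation $\dot{R}_r(x,y)-\dot{R}_s(x,y)=x[f_-(x)-f_+(x)]\le 0$, that ``the vector field of \eqref{general} points `inward' on trajectories of the shadow system'' whenever $x<0$. More concretely, the two vector fields differ only by $(f_-(x)-f_+(x),0)$, which for $x<0$ points in the positive $x$-direction; along the left arc of a counterclockwise shadow orbit this is the inward side. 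So the pointwise normal-component comparison you want is already present in the paper's argument for the Shadow Lemma, and you may cite it directly rather than repeating the second-order Taylor expansion at each boundary point.
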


 \begin{figure}[t]
	\begin{center}
		\includegraphics[width=0.6\textwidth]{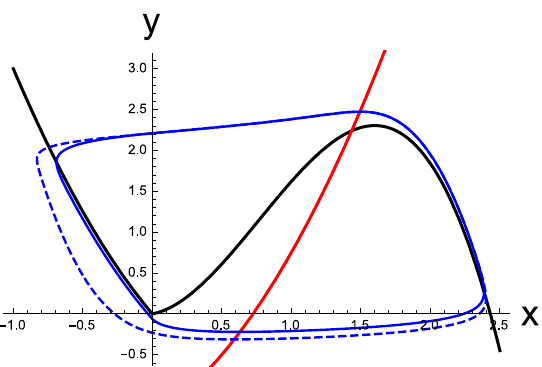}
			\caption{A `2'-shaped critical manifold is intersected by a nonlinear slow nullcline near the smooth fold at $x_M$.  The stable periodic orbit $\Gamma_s$ for shadow system \eqref{shadow} (dashed trajectory) bounds the stable periodic orbit $\Gamma_n$ of the true system \eqref{general} (solid trajectory).  Trajectories computed using NDSolve in Mathematica.}
			\label{fig:shadow}
	\end{center}
\end{figure}

\subsection{Canard cycles at the origin}
\label{sub:corner}
It should not be surprising that it is possible to have canard cycles near $x_M$.  In light of \cite{pwlCanards} and \cite{aar1}, one should expect to see canards near the corner at the origin as well, provided certain conditions are met.  By incorporating more complicated slow dynamics, the bifurcation point may be moved away from the corner.  However, there may still be a nonsmooth bifurcation at the corner, and it is not quite the special case that it would be in a smooth system.  The following theorem describes the periodic orbits that are created through a bifurcation at or near the corner at the origin.  
\begin{theorem}
\label{thm:corner}
	Given a system of the form \eqref{general} with $f_-,f_+ \in C^k, k \geq 3$, $f_-(0)=f_+(0)=0$, $f_-'(0) < 0$, and $f_+'(0) > 0$, suppose that $g_x(0,0;0,\eps) > \max \{- f_+'(0) g_y(0,0;0,\eps), -f_-'(0) g_y(0,0;0,\eps) \}.$  Then, for $\eps$ sufficiently small, there exists a $\lambda_0$ in a neighborhood of $0$ such that an attracting periodic orbit $\Gamma_n(\lambda)$ is created through a bifurcation when $\lambda = \lambda_0$.  Let $\alpha_\pm = f_\pm'(0) + \eps g_y(0,0;\lambda_0,\eps)$ and $\beta_\pm = [\eps g_y(0,0;\lambda_0,\eps) - f_\pm '(0)]^2 - 4 \eps g_x (0,0;\lambda_0,\eps)$.  The bifurcation is described by the following:
	\begin{enumerate}[(i)]
		\item If $ \alpha_- > 0$, then the bifurcation is a smooth Hopf bifurcation and $\lambda_0 < 0$.  
		\item If $ \alpha_+ < 0$, then the bifurcation is a smooth Hopf bifurcation and $\lambda_0 >0$.  
		\item If $f_-'(0) \leq - \eps g_y (0,0; \lambda_0,\eps) \leq f_+'(0)$ (i.e, $\pm \alpha_\pm \geq 0$), then the bifurcation is nonsmooth and $\lambda_0=0$. 
			\subitem (a) If $\beta_+ < 0$ and $\beta_- < 0$, then there is a nonsmooth Hopf bifurcation.  Let 
				\begin{equation}
					\label{nsHCrit}
					\Lambda = \frac{\alpha_+}{\sqrt{-\beta_+}} - \frac{- \alpha_-}{\sqrt{- \beta_-}}.
				 \end{equation}
				 Then the bifurcation is supercritical if $\Lambda <0$ and subcritical if $\Lambda >0$.
			\subitem (b) If $\beta_+ < 0$ and $\beta_- \geq 0$, then the bifurcation is a supercritical Hopf-like bifurcation that creates small amplitude periodic orbits.
			\subitem(c) If $\beta_+ \geq 0$, then the bifurcation is a super-explosion.  The bifurcation is subcritical if $\beta_- < 0$ and is supercritical if $\beta_- \geq 0$.
	\end{enumerate}
	Figure \ref{fig:cornerCanards} depicts the orbits in the supercritical cases.
\end{theorem}

\begin{figure}[t]
        \centering
        \begin{subfigure}[t]{0.45\textwidth}
                \includegraphics[width=\textwidth]{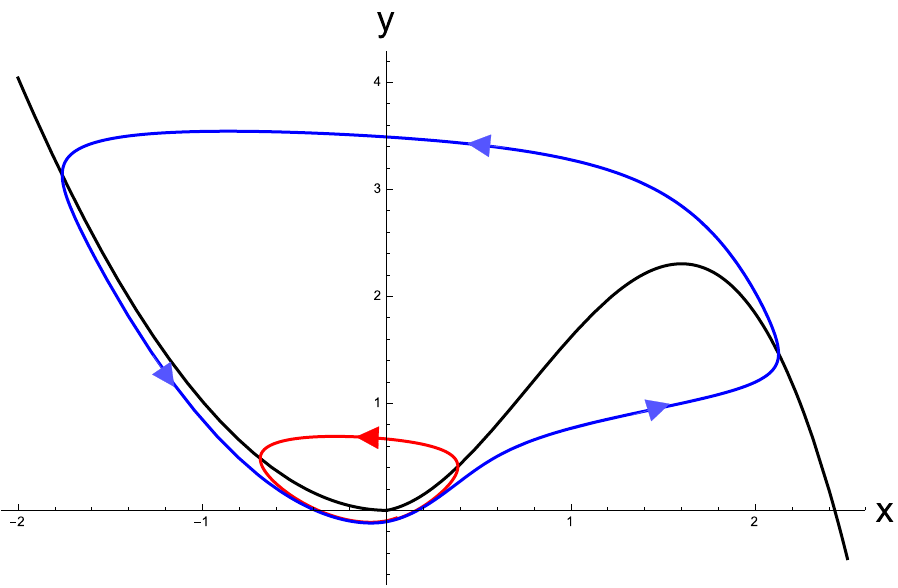}
                \caption{Canards in the case of Theorem \ref{thm:corner} (i) when $\eps = 0.1$.  Canard without head when $\lambda = -0.115$ (in red) and canard with head when $\lambda = -0.114$ (in blue).}
                \label{fig:alphaMP}
        \end{subfigure}
        ~ 
        \begin{subfigure}[t]{0.45\textwidth}
                \includegraphics[width=\textwidth]{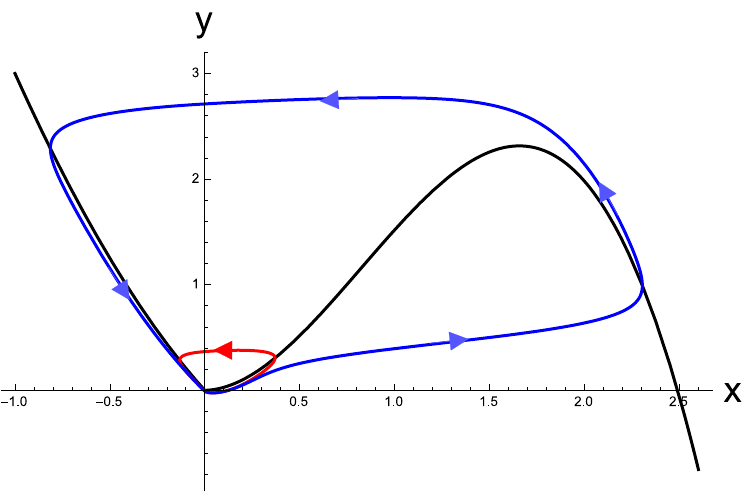}
                \caption{Canards in the case of Theorem \ref{thm:corner} (ii) when $\eps = 0.1$. Canard without head when $\lambda = 0.049$ (in red) and canard with head when $\lambda = 0.05$ (in blue).}
                \label{fig:alphaPN}
        \end{subfigure}
        \\ \vspace{0.4in}
                \centering
        \begin{subfigure}[t]{0.45\textwidth}
                \includegraphics[width=\textwidth]{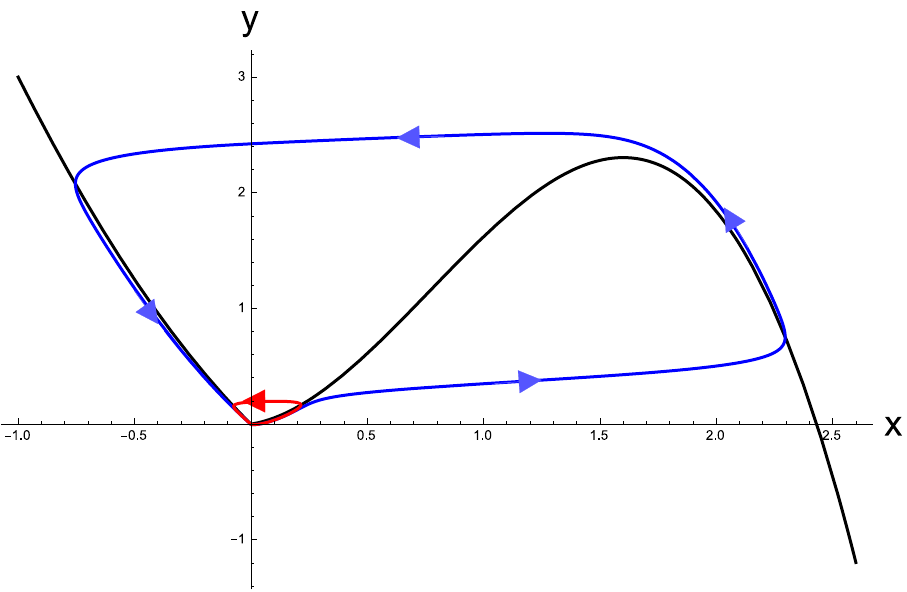}
                \caption{Canards in the case of Theorem \ref{thm:corner} (iii-b) when $\eps = 0.1$.  Canard without head when $\lambda = 0.01293$ (in red) and canard with head when $\lambda=0.01295$ (in blue). }
                \label{fig:alpha0C}
        \end{subfigure}
        ~ 
        \begin{subfigure}[t]{0.45\textwidth}
                \includegraphics[width=\textwidth]{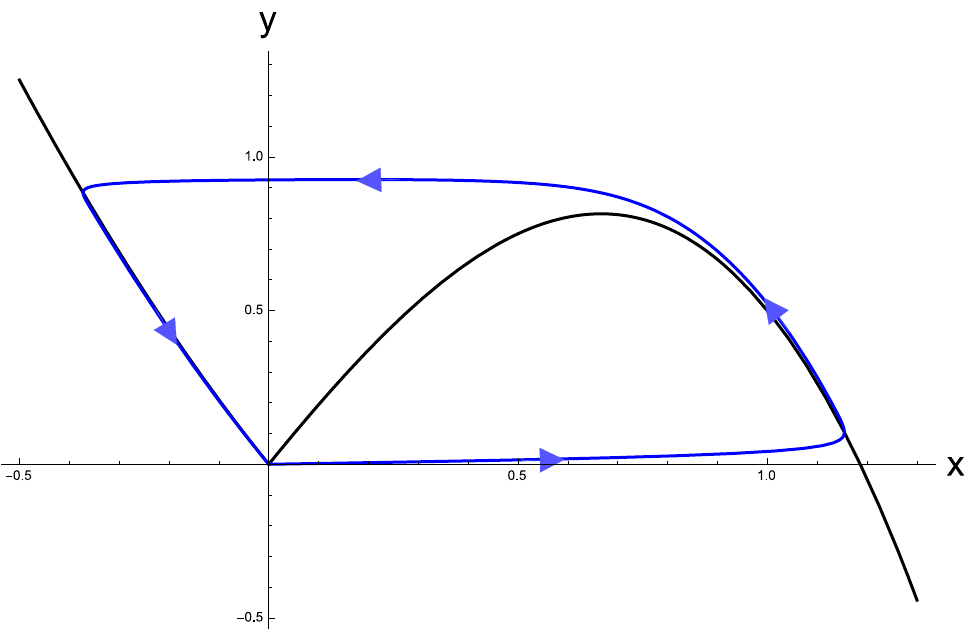}
                \caption{Super-explosion as in the case of Theorem \ref{thm:corner} (iii-c), for $\eps = 0.01$ and $\lambda = 10^{-7}$.}
                \label{fig:alpha0SE}
        \end{subfigure}
        \caption{Periodic orbits created through the bifurcation described in Theorem \ref{thm:corner}.  Trajectories computed using NDSolve in Mathematica.}
        \label{fig:cornerCanards}      
\end{figure}

\begin{proof}
By assumption, there is an equilibrium point at $(\lambda, F(\lambda))$, and for $\lambda \neq 0$, the linearization near the equilibrium is given by 
\begin{equation}
\label{jac}
\mathcal{J} = \left( \begin{array}{cc}
F'(\lambda) & -1 \\
	\eps g_x (\lambda, F(\lambda); \lambda, \eps) & \eps g_y (\lambda, F(\lambda); \lambda, \eps)
	\end{array} \right).
\end{equation}
Therefore, if $\pm \lambda > 0$, \eqref{jac} becomes
\begin{equation}
\label{jacPM}
\mathcal{J} = \mathcal{J_\pm} \left( \begin{array}{cc}
f_\pm'(\lambda) & -1 \\
	\eps g_x (\lambda, F(\lambda); \lambda, \eps) & \eps g_y (\lambda, F(\lambda); \lambda, \eps)
	\end{array} \right).
\end{equation}
The eigenvalues of $\mathcal{J_\pm}$ limit to 
$$ \alpha_\pm + \sqrt{\beta_\pm} \text{ and } \alpha_\pm - \sqrt{\beta_\pm}$$ as $\lambda \rightarrow 0^\pm.$  Also, since $f_+'(0) > 0 > f_-'(0)$, it is clear that $\alpha_+ > \alpha_-$.  

If $\alpha_- > 0$, for $\eps$ sufficiently small, there exists a $\lambda_*(\eps) <0$ such that $\text{Tr}(\mathcal{J}) = f_-'(\lambda_*) + \eps g_y(0,0;\lambda_*,\eps) < 0$.  Therefore, there exists a $\lambda_0 (\eps) \in (\lambda_*(\eps), 0)$ such that $\text{Tr}(\mathcal{J}) = 0$.  The condition that $g_x(0,0;0,\eps) > \max \{- f_+'(0) g_y(0,0;0,\eps), -f_-'(0) g_y(0,0;0,\eps) \}$ indicates that $\det (\mathcal{J})>0$, and there fore there is a smooth Hopf bifurcation at $\lambda_0(\eps) < 0 $.  A smilier argument can be used to show that there exists a $\lambda_0 (\eps) > 0$ such that a smooth Hopf bifurcation at $(\lambda_0,f_+(\lambda_0))$ if $\alpha_+ < 0$.  This proves parts (i) and (ii) of the theorem.

\begin{figure}[t]
        \centering
        \begin{subfigure}[t]{0.3\textwidth}
                \includegraphics[width=\textwidth]{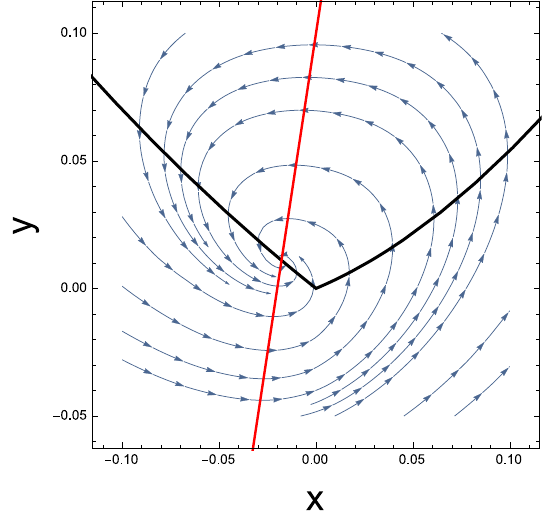}
                \caption{Phase portrait before bifurcation with $\lambda = -0.02$.  }
                \label{fig:canFBif1}
        \end{subfigure}
        ~ 
        \begin{subfigure}[t]{0.3\textwidth}
                \includegraphics[width=\textwidth]{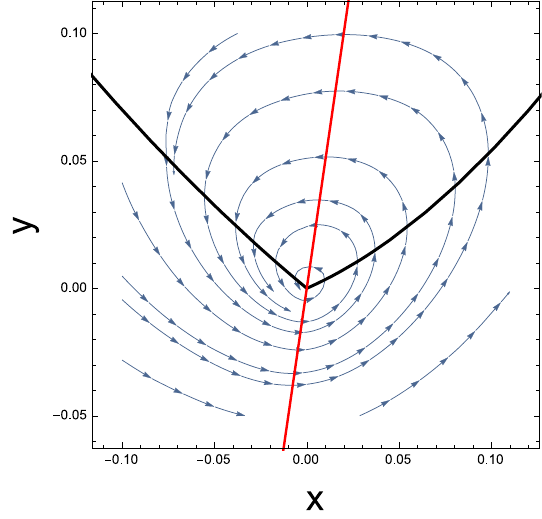}
                \caption{Phase portrait at bifurcation ($\lambda=0$).}
                \label{fig:canFBif2}
        \end{subfigure}
                ~ 
        \begin{subfigure}[t]{0.3\textwidth}
                \includegraphics[width=\textwidth]{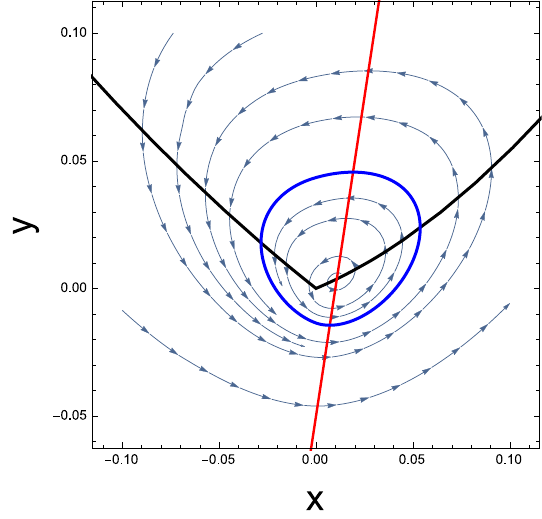}
                \caption{Phase portrait after bifurcation with $\lambda=0.01$.  The solid orbit (blue) depicts the attracting periodic orbit.}
                \label{fig:canFBif3}
        \end{subfigure}
        \caption{The nonsmooth Hopf bifurcation described in Theorem \ref{thm:corner} (iii-a) when $\eps = 0.1$ with critical manifold (black) and slow nullcline (red).  Trajectories computed using NDSolve in Mathematica.}
        \label{fig:canFBif}      
\end{figure}

If $\alpha_- \leq 0$ and $\alpha_+ \leq 0$, then $\text{Tr}(\mathcal{J}) \neq 0$ anywhere $\mathcal{J}$ is defined.  However, the equilibrium point at $(\lambda,F(\lambda))$ still transitions from having two stable eigenvalues to two unstable eigenvalues as $\lambda$ increases through zero.  If $\beta_+ < 0$, then the equilibrium will transition to being an unstable focus for $\lambda >0$ small enough.  If $\beta_- <0$ as well, then the transition is from a stable node to an unstable node.  Thus it is the nonsmooth equivalent of a Hopf bifurcation, discussed by Simpson and Meiss in \cite{simpmeiss}, and the criticality is determined by $\Lambda$.  Thus part (iii-a) is proved.  The bifurcation is depicted in Figure \ref{fig:canFBif}.

Now suppose $\beta_- \geq 0$.  Then the equilibria is a stable node for $\lambda <0$ and transitions to an unstable focus as $\lambda$ increases through zero.  Comparison with a shadow system will show that the bifurcation is supercritical.  Let the shadow system be 	\begin{equation}
\label{shadowPf}
		\begin{array}{rl}
			\dot{x} &= -y + \tilde{F}(x) \\
			\dot{y} &= \eps g(x,y; \lambda, \eps),
		\end{array}
	\end{equation}
	where $$ \tilde{F}(x) = \left\{ \begin{array}{ll}
	\tilde{f}_-(x) & x\leq 0 \\
	f_+(x) & x \geq 0
	\end{array} \right. $$ 
	with $\tilde{f}_-'(x) < 0$ for all $x < 0$ and $\tilde{f}_-'(0) =\min{0, -\eps g_y}.$  Define $\tilde{\alpha}_-,\tilde{\beta}_-$ to be the corresponding quantities for the shadow system.  Then $\tilde{\alpha}_- \leq 0$, and
	\begin{align*}
		\tilde{\beta}_- & = [ \tilde{f}_-(0) - \eps g_y(0,0;0,\eps)]^2 - 4 \eps g_x(0,0;0,\eps) \\
					& \leq [2 \eps g_y (0,0;0,\eps)]^2 - 4 \eps g_x(0,0;0,\eps)\\
					&= 4 \eps [ \eps g_y(0,0;0,\eps)^2 - g_x(0,0;0,\eps)]
					& <0
	\end{align*}
	since $g_x(0,0;0,\eps) > \max \{- f_+'(0) g_y(0,0;0,\eps), -f_-'(0) g_y(0,0;0,\eps) \}$.  Therefore, the shadow system\eqref{shadowPf} satisfies the conditions of (iii-b), and for $\lambda >0$ small enough there will be a stable periodic orbit.  By Lemma \ref{lem:CorShad}, orbits of the main system \eqref{general} are bounded inside the orbits of the shadow system.  Since the only equilibrium point inside the periodic orbit
is repelling, the Poincar\'{e}-Bendixson theorem guarantees the existence of an attracting periodic orbit, and this orbit will be bounded inside the periodic orbit of the shadow system.  This proves (iii-b).  The bifurcation is depicted in Figure \ref{fig:canBif}.

\begin{figure}[t]
        \centering
        \begin{subfigure}[t]{0.3\textwidth}
                \includegraphics[width=\textwidth]{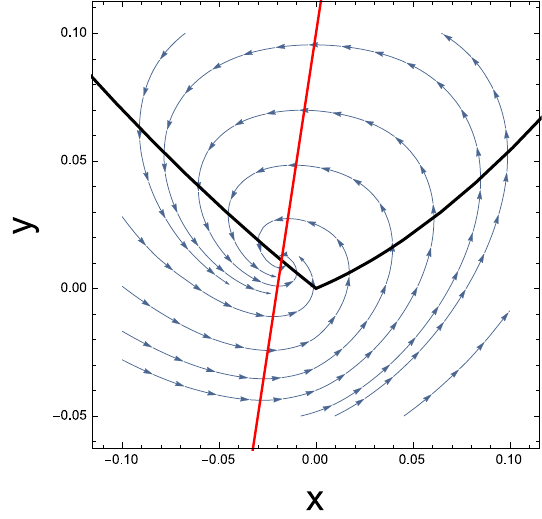}
                 \caption{Phase portrait before bifurcation with $\lambda = -0.05$.  }
                \label{fig:canBif1}
        \end{subfigure}
        ~ 
        \begin{subfigure}[t]{0.3\textwidth}
                \includegraphics[width=\textwidth]{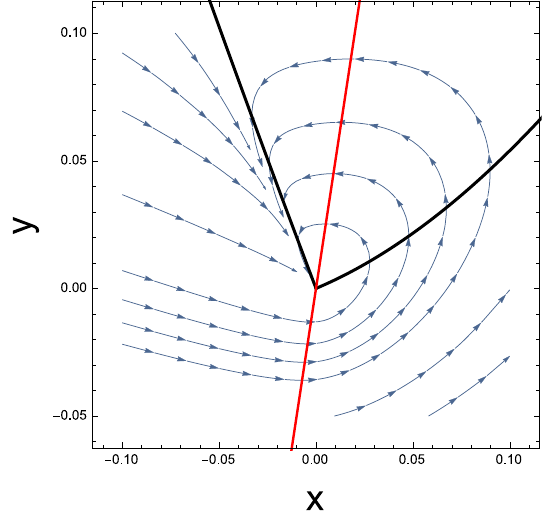}
                \caption{Phase portrait at bifurcation ($\lambda=0$).}
                \label{fig:canBif2}
        \end{subfigure}
                ~ 
        \begin{subfigure}[t]{0.3\textwidth}
                \includegraphics[width=\textwidth]{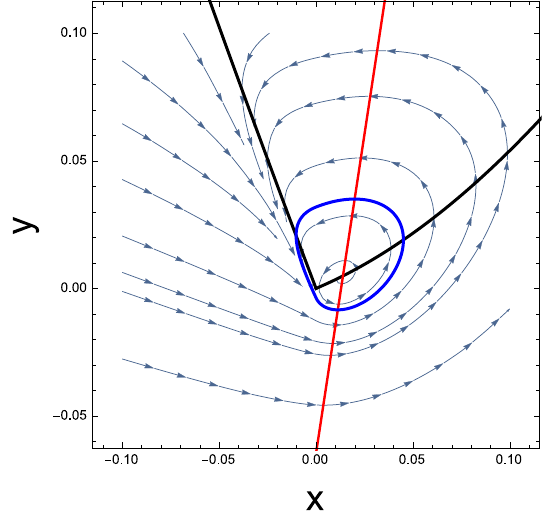}
                \caption{Phase portrait after bifurcation with $\lambda=0.013$. The solid orbit (blue) depicts the attracting periodic orbit.}
                \label{fig:canBif3}
        \end{subfigure}
       \caption{The nonsmooth Hopf-like bifurcation described in Theorem \ref{thm:corner} (iii-b) when $\eps = 0.1$ with critical manifold (black) and slow nullcline (red).  Trajectories computed using NDSolve in Mathematica.}
        \label{fig:canBif}      
\end{figure}

Finally, suppose $\beta_+ \geq 0$.  Then the equilibrium point either transitions from a stable node to unstable node (if $\beta_- \geq 0$) or from a stable focus to an unstable node (if $\beta_- < 0$).  First, consider the case $\beta_- \geq 0$.  For $\lambda$ small, but positive, the Jacobian of the system at the equilibrium point is 
\begin{equation}
\label{jacSE}
\mathcal{J} = \left( \begin{array}{cc}
f_+'(\lambda) & -1 \\
	\eps g_x (\lambda, f_+(\lambda); \lambda, \eps) & \eps g_y (\lambda, f_+(\lambda); \lambda, \eps)
	\end{array} \right).
\end{equation}
Let $v_1 = (a_1,b_1),\ v_2=(a_2,b_2)$ be the eigenvectors of $\mathcal{J}$ corresponding to $0 < \mu_1 < \mu_2$, respectively.  
Then
\begin{align*}
	f_+'(\lambda) a_i - b_i &= \mu_i a_i \\
	\eps g_x a_i + \eps g_y b_i &= \mu_i b_i,
\end{align*}
and manipulating the second equations gives
$$ \frac{b_i}{a_i} = \frac{\eps g_x}{\mu_i - \eps g_y}.$$
Since $\mu_2 \rightarrow 2 f_+'(\lambda) \neq 0$ as $\eps \rightarrow 0$, the slope of $v_2$ can be made arbitrarily small (and positive) for $\eps$ small enough.  Therefore, the strong unstable trajectory leaves the equilibrium point and crosses the critical manifold to the right of $x = x_M$.  From there, the trajectory tracks the critical manifold passing above the point $(x_M, f_+(x_M))$ on its way to the left half-plane.  Once in the left half-plane, the trajectory proceeds left until it crosses the critical manifold again, and then moves down and to the right until it crosses the $x$-axis again, this time below the origin, and then eventually it intersects the slow nullcline.  Let $V$ be the set enclosed by the trajectory and slow nullcline, as shown in Figure \ref{fig:V}.  Then all trajectories outside of $V$ are bounded outside of $V$.  Additionally, it is possible to construct a positively invariant set $W$, similar to the one described in \cite{aar1}. $W$ is also depicted in Figure \ref{fig:V}.  The set $W \setminus V$ is positively invariant and contains no stable equilibria, so the existence of an attracting periodic orbit is guaranteed by the Poincar\`{e}-Bendixson theorem.  Since this orbit is bounded away from the repelling branch of the critical manifold, it must be a relaxation oscillation.  Therefore, as $\lambda$ increases through zero, the system \eqref{general} undergoes a super-explosion bifurcation. 

Figure \ref{fig:expBif} depicts the supercritical super-explosion bifurcation.  The transition from stable equilibrium to stable relaxation oscillation through a supercritical bifurcation is fundamentally different from what is observed in the smooth case, and thus is a consequence of the piecewise nature of the vector field.  One reason for the difference is that the equilibrium at the bifurcation point is globally attracting, but not locally stable.  In the vector field defined on the left half-plane, the equilibrium at the origin is a stable node, while it is an unstable node in the vector field on the right half-plane.  Figure \ref{fig:expBif2} shows the equilibrium with its strong stable manifold in the left half-plane and its global strong unstable manifold in the right half-plane.  The strong unstable manifold eventually enters the left half-plane and returns to the equilibrium point, creating a homoclinic orbit.  This homocolinc orbit forms a separatrix between the two basic types of orbits in this system.  The region inside this trajectory consists of a family of homoclinic orbits.  All trajectories outside this region only approach the equilibrium in forward time.  Again looking at Figure \ref{fig:expBif2}, it is clear that the only trajectories originating in the left half-plane that cross over into the right half-plane do so below the strong stable manifold and pass below the strong unstable manifold (before eventually returning to the left half-plane).  This shows that no trajectories pass from a stable slow manifold to an unstable slow manifold.  Figure \ref{fig:expBif3} shows that this is the case after the bifurcation as well.

\begin{figure}[t]
        \centering
        \begin{subfigure}[t]{0.3\textwidth}
                \includegraphics[width=\textwidth]{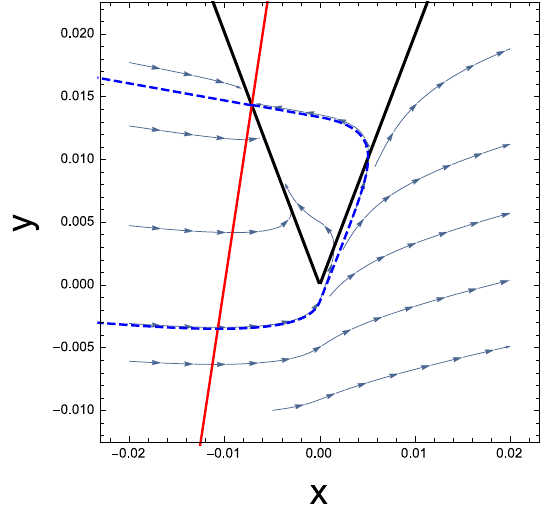}
                 \caption{Phase portrait before bifurcation with $\lambda = -0.01$.  Dashed blue trajectory depicts the strong stable trajectory to the folded node.}
                \label{fig:expBif1}
        \end{subfigure}
        ~ 
        \begin{subfigure}[t]{0.3\textwidth}
                \includegraphics[width=\textwidth]{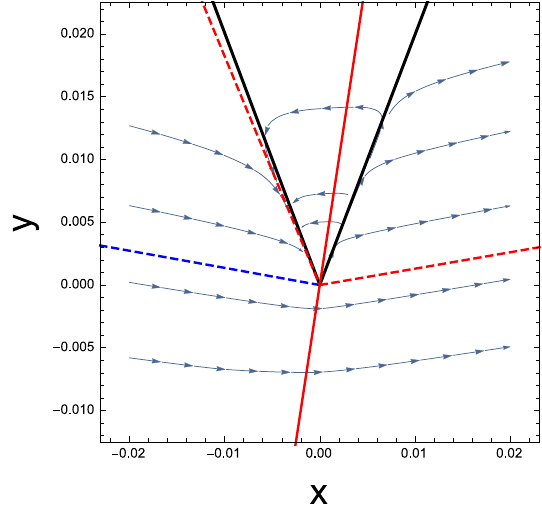}
                \caption{Phase portrait at bifurcation ($\lambda=0$).  Dashed trajectories depict strong stable (blue) and unstable (red) manifolds.}  Notice that the global strong unstable trajectory forms a homoclinic orbit.
                \label{fig:expBif2}
        \end{subfigure}
                ~ 
        \begin{subfigure}[t]{0.3\textwidth}
                \includegraphics[width=\textwidth]{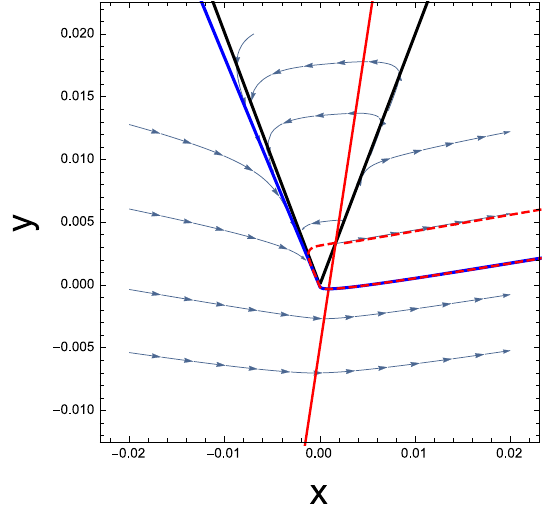}
                \caption{Phase portrait after bifurcation with $\lambda=0.013$.  Dashed red trajectory depicts strong unstable trajectory.  The solid orbit (blue) depicts a portion of the stable periodic orbit created through the bifurcation.}
                \label{fig:expBif3}
        \end{subfigure}
       \caption{The supercritical super-explosion bifurcation described in Theorem \ref{thm:corner} (iii-c) when $\eps = 0.01$ with critical manifold (black and slow nullcline (red).  Trajectories computed using NDSolve in Mathematica.}
        \label{fig:expBif}      
\end{figure}

Now, let $\beta_- < 0$ so that the equilibrium point at $(\lambda,f_-(\lambda))$ is a stable focus for $\lambda < 0$.  Then for $|\lambda|$ sufficiently small, there exists a $z_+$ such that $0 < z_+ < f_+(x_M)$ the trajectory through $(0,z+)$ spirals around the equilibrium in the left half plane and intersects the $x$-axis again at a point $(0,z_-)$ with $z_- < 0$.  The dynamics in the right half-plane are governed by an unstable node in the shadow system given by \eqref{shadow}, sending the trajectory to the right across the critical manifold, up over the point $(x_M,f_+(x_M))$ and back to the $x$-axis at a point $(0,z')$ where $z' > z_+$.  Let $V'$ denote the set bounded by this trajectory and the segment of the $x$-axis connecting $z_+$ to $z'$, as shown in Figure \ref{fig:VP}.  Then $V'$ is negatively invariant and contains only a stable equilibrium.  Therefore, there is a repelling periodic orbit inside $V'$.  Additionally, the set $W \setminus V$ is positively invariant and contains no equilibria.  So there is an attracting periodic orbit that is bounded away from the repelling branch of the critical manifold.  Thus an attracting equilibrium, repelling periodic orbit, and attracting relaxation oscillation exist simultaneously for $\lambda <0$.  Therefore the bifurcation that occurs as $\lambda$ increases through zero is a subcritical super-explosion.

\begin{figure}[t]
        \centering
        \begin{subfigure}[t]{0.45\textwidth}
                \includegraphics[width=\textwidth]{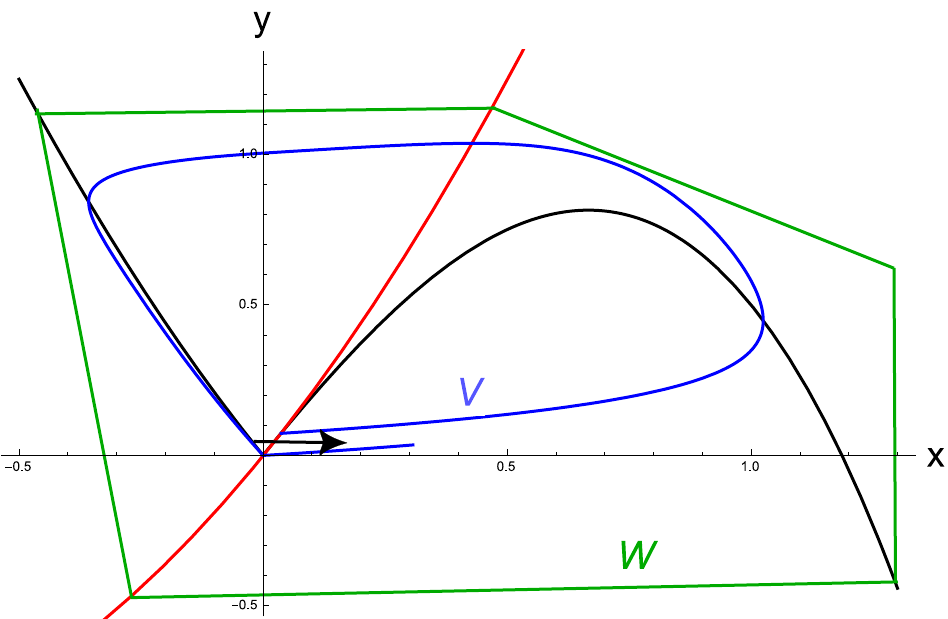}
                \caption{The sets $V$ and $W$ from the proof of Theorem \ref{thm:corner}, part (iii-c) when $\lambda = 0.037$.}
                \label{fig:V}
        \end{subfigure}
        ~ 
        \begin{subfigure}[t]{0.45\textwidth}
                \includegraphics[width=\textwidth]{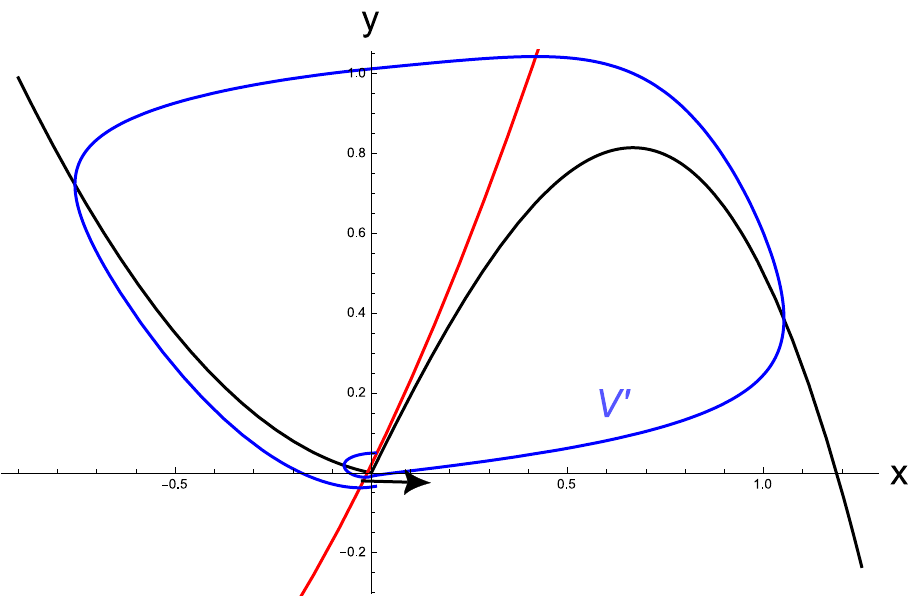}
                \caption{The set $V'$ from the proof of Theorem \ref{thm:corner}, part (iii-c) when $\lambda = -0.01$.  $W$ can be constructed similarly to Figure \ref{fig:V}. }
                \label{fig:VP}
        \end{subfigure}
        \caption{Sets showing the existence of a super-explosion bifurcation when $\eps = 0.1$.  (a) The bifurcation is supercritical.  The equilibrium point is repelling, and a stable periodic orbit exists in the region $W \setminus V$.  (b) The bifurcation is subcritical.  The equilibrium point is attracting, an unstable periodic orbit exits in $V'$, and a stable periodic orbit exists in $W \setminus V'$.  Trajectories computed using NDSolve in Mathematica.}
        \label{fig:vs}      
\end{figure}

\end{proof}
 
Applying the shadow lemma (Lemma \ref{lem:shadow}) provides an immediate corollary regarding the amplitudes of the periodic orbits created through the bifurcation described in Theorem \ref{thm:corner}.

\begin{corollary}
Fix $\eps, \lambda > 0$.  Let $\Gamma_n$ be an attracting periodic orbit in system \eqref{general} created through the bifurcation described in Theorem \ref{thm:corner}, and assume the shadow system \eqref{shadow} has an attracting periodic orbit $\Gamma_s$.  Then $\Gamma_n$ is bounded by $\Gamma_s$.  
\end{corollary}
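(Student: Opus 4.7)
The plan is to apply Lemma \ref{lem:shadow} to show that the closed topological disk $\bar{D}_s$ bounded by the shadow limit cycle $\Gamma_s$ is forward invariant under the flow of \eqref{general}, and then to conclude by a Poincar\'{e}--Bendixson style argument that $\Gamma_n\subseteq\bar{D}_s$. This is precisely the same template that was used inside the proof of Theorem \ref{thm:corner} (iii-b), and the Shadow Lemma is doing exactly the work it was designed for.

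First I would verify forward invariance of $\bar{D}_s$ under the true flow. Because \eqref{general} and \eqref{shadow} coincide for $x\geq 0$, the right-half-plane arc of $\Gamma_s$ is simultaneously a trajectory segment of both systems; uniqueness of solutions prevents any solution of \eqref{general} from crossing it transversally. In the left half-plane the two vector fields differ by the horizontal perturbation $(f_-(x)-f_+(x),\,0)$, which points strictly in the positive $x$-direction for $x<0$ because $f_-(x)>f_+(x)$ there. I would then apply Lemma \ref{lem:shadow} to \emph{every} point $(0,y_c)$ of $\bar{D}_s\cap\{x=0\}$: the true trajectory leaving $(0,y_c)$ into $\{x<0\}$ is radially bounded by the shadow trajectory leaving the same point, and that shadow trajectory is contained in $\bar{D}_s$ because $\Gamma_s$ is a periodic orbit of the shadow system and $\bar{D}_s$ is shadow-invariant. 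Combining this with the right-half-plane coincidence yields forward invariance of $\bar{D}_s$ under \eqref{general}. Next, I would close the argument topologically: since $\lambda>0$, the unique equilibrium $(\lambda,f_+(\lambda))$ of \eqref{general} lies in the open right half-plane, is shared with \eqref{shadow}, and is enclosed by both $\Gamma_s$ and $\Gamma_n$, so it belongs to the interior of $\bar{D}_s$ and to the interior of $\Gamma_n$. In particular $\Gamma_n$ meets the interior of $\bar{D}_s$. If $\Gamma_n$ also possessed a point outside $\bar{D}_s$, connectedness would force $\Gamma_n$ to cross $\Gamma_s$; forward invariance forbids a true-system trajectory from escaping $\bar{D}_s$, and in particular a closed orbit cannot enter and later exit. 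Therefore $\Gamma_n\subseteq\bar{D}_s$, which is exactly the statement that $\Gamma_n$ is bounded by $\Gamma_s$.

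The main obstacle I anticipate is translating Lemma \ref{lem:shadow}, which is phrased trajectory-by-trajectory for initial data on the $y$-axis, into the region-level statement that $\bar{D}_s$ is forward invariant. This requires applying the lemma uniformly to every point of $\Gamma_s\cap\{x=0\}$ and using the orientation of $\Gamma_s$ (a counterclockwise relaxation-type cycle that enters the left half-plane at the upper crossing $(0,y_s^+)$ and exits at the lower crossing $(0,y_s^-)$) so that the horizontal perturbation $(f_-(x)-f_+(x),\,0)$ sits on the inward side of $\Gamma_s$ in $\{x<0\}$, with tangency only at the two crossings on the $y$-axis, which the second-order expansion in the proof of Lemma \ref{lem:shadow} already handles. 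Modulo this bookkeeping, the corollary follows immediately.
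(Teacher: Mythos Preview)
Your proposal is correct and follows exactly the route the paper intends: the corollary is stated there without proof as an immediate consequence of the Shadow Lemma, and your argument simply makes explicit the forward-invariance of $\bar{D}_s$ and the Poincar\'{e}--Bendixson step that the paper leaves to the reader. The orientation check you flag---that the horizontal perturbation $(f_-(x)-f_+(x),0)$ points to the inward side of $\Gamma_s$ on the left arc because $\dot{y}=\eps g<0$ there for $\lambda>0$---is the only real bookkeeping needed, and you have identified it correctly.
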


\section{Discussion}
\label{section:discussion}
It has been shown that systems of the form \eqref{general} undergo bifurcations creating attracting periodic orbits as the slow nullcline passes near either a smooth fold at $x = x_M$ or a corner at the origin.  In \cite{aar1}, only piecewise-smooth Van der Pol systems are discussed, so the location of the bifurcation was known {\it a priori}.  In Van der Pol systems, the bifurcation always occurs at an extremum of the critical manifold.  Allowing the slow dynamics to depend on the slow variable $y$ means that $g_y$ is no longer identically zero.  Hence, the bifurcation point might not occur at the corner as shown in parts (i) and (ii) of Theorem \ref{thm:corner}.  However, as in part (iii) of Theorem \ref{thm:corner}, the bifurcation point may still remain at the corner even if $g_y(0,0,) \neq 0$, depending on the values of $f_+'(0)$ and $f_-'(0)$.  This is a contrast to the analogous smooth Hopf bifurcation, where the bifurcation occurs at the extremum only if $g_y = 0$ at the fold.  

At the smooth fold, the bifurcation is a standard, smooth Hopf bifurcation.  The classical canard theory is holds up until the grazing bifurcation, and is extended by comparison with a shadow system.  Again, comparison with a shadow system is utilized to discuss global dynamics when the bifurcation occurs near the corner.  However, at the corner there are five distinct possible bifurcations.  Each bifurcation, and whether or not it produces stable canard cycles, is determined by the type of equilibrium on each side of the splitting line.
	\begin{enumerate}
		\item The bifurcation is smooth Hopf bifurcation, and criticality is determined in the standard way for smooth systems.  A smooth Hopf bifurcation happens if and only if the bifurcation does not occur as the equilibrium passes through the splitting line.  As such, the type of equilibrium does not change across the splitting line.  
		\item A stable focus becomes an unstable focus upon passing through the splitting line.  This is often called a `nonsmooth Hopf' bifurcation, and it's criticality is determined by $\Lambda$ in equation \eqref{nsHCrit}.  Stable canard cycles are created if the bifurcation is supercritical.
		\item A stable node becomes an unstable focus upon passing through the splitting line.  I propose that this should be called a `nonsmooth Hopf-like' bifurcation, and this bifurcation is always supercritical, creating stable cycles.
		\item A stable node becomes and unstable node upon passing through the splitting line creating stable relaxation oscillations.  The bifurcation is a super-explosion because the system forgoes the canard explosion, and it is always supercritical. 
		\item A stable focus becomes an unstable node upon passing through the splitting line, and unstable relaxation oscillations occur just before the bifurcation point. The bifurcation is a subcritical super-explosion, and again, no stable canard cycles are created.
	\end{enumerate}
	
Theorem \ref{thm:corner} provides conditions determining which of the bifurcations occurs at the corner.  One remarkable consequence of these conditions is that  if $f_+'(0) > 0$ and for $\eps$ sufficiently small, the system will not have canard orbits.  This is a major difference from the classical smooth theory where canard cycles exist for $\eps$ sufficiently small.  Essentially, there is a range of $\eps$ bounded above 0 for which canard cycles exist.  The reason for the contrast from the smooth case is that for $\eps$ sufficiently small, the bifurcation will be a super-explosion, creating an unstable node.  Because a node is created, there is no rotation near the unstable equilibrium, and it is not possible for an attracting slow manifold to connect to a repelling slow manifold.  Trajectories pass into the right half plane, and are bounded away from a slow manifold by the strong repelling trajectory from the node as in Figure \ref{fig:expBif3}.  However, if the system \eqref{general} is `close enough' to smooth (relative to $\eps$), then the canard explosion will be comparable to the classical smooth case.

The motivation for \cite{aar1}, the predecessor to this work, was to be able to analyze a variation of a nonsmooth thermohaline circulation model \cite{aar2}.  Conceptual climate models have utilized nonsmooth approximations of smooth systems in the past \cite{stommel, Welander82}, and it seems this is becoming ever more popular.  For example, over the last 7 years \cite{abbot11,jormungand,dV07,dV10,Eisenman09,Eisenman12, hogg} all introduce nonsmooth conceptual climate models.  The models in these papers are largely analyzed by simulation.  Further developing our understanding of nonsmooth dynamics will be important in developing our understanding of the climate.

From a more theoretical point of view, this work is indicative of the rejuvenated interest in canards, particularly in nonsmooth systems.  Work is already under way using these results to discuss the existence of sliding canard solutions in discontinuous planar systems, and in certain cases it is essential that the slow dynamics are more complicated than those of a Van der Pol-type system.  Looking forward, it is desirable to obtain results on canards in higher dimensions.  Prohens and Teruel have made the first contribution by obtaining results in piecewise-linear, continuous 3D systems.  By incrementally increasing our understanding of canards in nonsmooth systems, the goal is ultimately to obtain results on the existence nonsmoth mixed-mode oscillations.  

\section*{Acknowledgments}
This work was made possible by support from the Mathematics and Climate Research Network, through NSF grants DMS-0940363 and DMS-1239013.  I would like to thank Anna Barry and the MCRN Nonsmooth Systems group for their conversations.  Lastly, I would like to thank John Guckenheimer and Reviewer \#2 for their comments during the review process.

\bibliographystyle{amsplain}
\bibliography{sources}

\end{document}